\newcommand{\IB}{{\mathbb B}}
\newcommand{\IN}{{\mathbb N}}
\newcommand{\cH}{{\mathcal H}}
\newcommand{\cU}{{\mathcal U}}
\newcommand{\acts}{\curvearrowright}
\DeclareMathOperator{\supp}{supp}
\DeclareMathOperator{\Ad}{Ad}
\newcommand{\ip}[1]{\mathopen{\langle}#1\mathclose{\rangle}}
\newtheorem*{thm*}{Theorem}
\newtheorem*{prop*}{Proposition}
\newtheorem*{lem*}{Lemma}
\newtheorem{thmA}{Theorem}
\title[Gromov's polynomial growth theorem]{A functional analysis proof of Gromov's polynomial growth theorem}
\author{Narutaka Ozawa}
\address{RIMS, Kyoto University, \mbox{606-8502} Japan}
\email{narutaka@kurims.kyoto-u.ac.jp}
\thanks{The author was partially supported by JSPS26400114}
\subjclass{20F65; 60B15, 43A07}
\keywords{Reduced cohomology, hamonic $1$-cocycles}
\date{2016 March 12}
\begin{document}
\begin{abstract}
The celebrated theorem of Gromov asserts that any finitely generated group 
with polynomial growth contains a nilpotent subgroup of finite index. 
Alternative proofs have been given by Kleiner and others. 
In this note, we give yet another proof of Gromov's theorem, 
along the lines of Shalom and Chifan--Sinclair, which is based on 
the analysis of reduced cohomology and Shalom's property $H_{\mathrm{FD}}$. 
\end{abstract}
\maketitle
\section{Introduction}
The celebrated theorem of Gromov (\cite{gromov,vw}) asserts that any finitely generated group 
with weakly polynomial growth contains a nilpotent subgroup of finite index. 
Here a group $G$ is said to have \emph{weakly polynomial growth} if 
$\liminf \log|S^n|/\log n<\infty$ for any finite generating subset $S$ such that $1\in S=S^{-1}$. 
Alternative proofs have been 
given by Kleiner and others (\cite{kleiner,shalom-tao,hrushovski,bgt}). 
In this note, we give yet another proof of Gromov's theorem, along the lines of 
Shalom (\cite{shalom-acta}) and Chifan--Sinclair (\cite{chifan-sinclair}),
which is based on the analysis of reduced cohomology and Shalom's property $H_{\mathrm{FD}}$. 

Let $\pi\colon G\acts\cH$ be a unitary representation. 
Recall that a \emph{$1$-cocycle} of $G$ with coefficients in $\pi$ 
is a map $b\colon G \to \cH$ which satisfies 
\[
\forall g,x\in G\quad b(gx)=b(g)+\pi_gb(x).
\] 
A \emph{$1$-coboundary} is a $1$-cocycle of the form $b(g)=\xi-\pi_g\xi$ for some $\xi\in\cH$, 
and an \emph{approximate $1$-coboundary} is a $1$-cocycle that is a pointwise limit of 
$1$-coboundaries. 
The spaces of $1$-cocycles, $1$-coboundaries, approximate $1$-coboundaries are 
written respectively by $Z^1(G,\pi)$, $B^1(G,\pi)$, and $\overline{B^1(G,\pi)}$, 
and so the \emph{reduced cohomology} space $\overline{H^1}(G,\pi)$ is 
$Z^1(G,\pi)/\overline{B^1(G,\pi)}$.
It is proved by Mok and Korevaar--Schoen 
(\cite{mok,ks}, see also \cite{shalom-invent} and 
Theorem~\ref{thmA} in Appendix) that 
any finitely generated group $G$ without Kazhdan's property (T) 
admits a unitary representation $\pi$ with $\overline{H^1}(G,\pi)\neq0$. 
A group $G$ is said to have \emph{Shalom's property $H_{\mathrm{FD}}$} if 
$\overline{H^1}(G,\pi)\neq0$ implies that $\pi$ is not weakly mixing.
Here $\pi$ is said to be \emph{weakly mixing} if $\cH$ admits no nonzero 
finite-dimensional $\pi(G)$-invariant subspaces. 
We recall that infinite amenable groups, and in particular groups with weakly polynomial growth, 
do not have property (T) (see, e.g., \cite[Chapter 12]{bo}). 
Thus, if such a group has property $H_{\mathrm{FD}}$, then it has a finite-dimensional 
unitary representation $\pi$ with $\overline{H^1}(G,\pi)\neq0$. 
Shalom has observed that a proof of property $H_{\mathrm{FD}}$ for 
a group with weakly polynomial growth implies Gromov's theorem 
(see \cite[Section 6.7]{shalom-acta} and \cite{tao-blog}). 
In this paper, we prove that a group with slow entropy growth 
has property $H_{\mathrm{FD}}$, thus giving a new proof of Gromov's theorem. 
Here we say $G$ has \emph{slow entropy growth} 
if there is a non-degenerate finitely-supported symmetric probability measure $\mu$ on $G$ 
with $\mu(e)>0$ such that  
\[
\liminf_n n(H(\mu^{*n+1})-H(\mu^{*n}))<\infty,
\] 
where $H$ is the entropy functional.
This property is formerly weaker than but probably equivalent 
to weakly polynomial growth (see Section~\ref{sec:entropy}).

\begin{thm*}\label{thm}
A finitely generated group with slow entropy growth has property $H_{\mathrm{FD}}$.  
\end{thm*}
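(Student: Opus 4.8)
The plan is to argue by contradiction: given a unitary representation $\pi\colon G\acts\cH$ with $\overline{H^1}(G,\pi)\neq0$, assume $\pi$ is weakly mixing and derive a contradiction with slow entropy growth.

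\emph{Step 1: reduce to a nonzero harmonic cocycle.} Fix a non-degenerate finitely-supported symmetric $\mu$ with $\mu(e)>0$ witnessing slow entropy growth, and pick $b\in Z^1(G,\pi)\setminus\overline{B^1(G,\pi)}$. Replacing $\cH$ by $\overline{\mathrm{span}}\,\pi(G)b(G)$ we may assume $b$ is cyclic; this keeps $\overline{H^1}\neq0$, and a subrepresentation of a weakly mixing representation is still weakly mixing. Minimizing the Dirichlet energy $\sum_g\mu(g)\|b(g)\|^2$ over the class $b+\overline{B^1(G,\pi)}$ (or invoking the Mok and Korevaar--Schoen theorem recalled in the Appendix) produces a $\mu$-harmonic representative, i.e.\ $\sum_g\mu(g)b(g)=0$; this representative is nonzero because a $\mu$-harmonic coboundary $\xi-\pi_g\xi$ forces $\pi(\mu)\xi=\xi$, hence $\xi$ is $\pi(G)$-invariant and so $\xi=0$ by weak mixing. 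So we have a nonzero $\mu$-harmonic $1$-cocycle $b$.

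\emph{Step 2: the random walk and the entropy identity.} Let $s_1,s_2,\dots$ be i.i.d.\ with law $\mu$ and $X_n=s_1\cdots s_n$, so $\mathrm{law}(X_n)=\mu^{*n}$. Harmonicity makes $M_n:=b(X_n)$ an $\cH$-valued martingale with stationary, pairwise-orthogonal increments $M_{n+1}-M_n=\pi_{X_n}b(s_{n+1})$, whence $\mathbb{E}\|M_n\|^2=n\beta$ with $\beta:=\sum_g\mu(g)\|b(g)\|^2>0$ (using non-degeneracy and $b\neq0$). On the entropy side, a standard computation (using that $X_n,s_{n+1}$ are independent and $X_{n+1}=X_ns_{n+1}$) gives $H(\mu^{*(n+1)})-H(\mu^{*n})=I(X_{n+1};s_{n+1})$, the mutual information between the endpoint and the last step. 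Thus slow entropy growth says precisely $\liminf_n n\,I(X_{n+1};s_{n+1})<\infty$.

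\emph{Step 3 (the crux): weak mixing forces $n\,I(X_{n+1};s_{n+1})\to\infty$.} Weak mixing of $\pi$ is equivalent to $\pi\otimes\overline\pi$ having no nonzero invariant vectors, which for our $\mu$ yields $(\pi\otimes\overline\pi)(\mu)^n\to0$ strongly, i.e.\ $\mathbb{E}|\ip{\pi_{X_n}v,w}|^2\to0$ for all $v,w\in\cH$. Consequently the martingale increments $\pi_{X_n}b(s_{n+1})$ become asymptotically isotropic: for every finite-dimensional subspace $V$ one gets $\mathbb{E}\|P_VM_n\|^2=o(n)$, so $M_n$ genuinely spreads into infinitely many dimensions while $\mathbb{E}\|M_n\|^2=n\beta$. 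Since $X_{n+1}$ determines $M_{n+1}$ and the last increment faithfully records $b(s_{n+1})$, hence $s_{n+1}$ up to the subgroup $\{g:b(g)=0\}$ (which one checks contributes only a bounded correction to $H(\mu^{*n})$), this spreading must make $I(X_{n+1};s_{n+1})$ decay strictly slower than $1/n$; quantitatively one bounds $H(\mu^{*n})$ from below by an entropy-power/covering estimate for $\mathrm{law}(M_n)$ on $\cH$ and differentiates. This last quantitative step is the main obstacle: the naive first-moment estimate, which compares only the means $\mathbb{E}_{R_s\mu^{*n}}[b]=\pi(\mu)^nb(s)$ — and these themselves tend to $0$ — is too lossy, so one must exploit the full increment distribution, not merely its mean. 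Granting it, we contradict slow entropy growth, so $\pi$ is not weakly mixing, proving property $H_{\mathrm{FD}}$.
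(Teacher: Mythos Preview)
Your Steps 1 and 2 are fine, and the key analytic fact you isolate --- that for a $\mu$-harmonic cocycle in a weakly mixing representation one has $\sup_{\|\xi\|\le1}\frac{1}{n}\sum_x\mu^{*n}(x)|\ip{b(x),\xi}|^2\to0$ --- is exactly the paper's Lemma. The gap is your Step 3, which you yourself flag as ``the main obstacle'' and then simply grant. The passage from ``$M_n$ spreads into infinitely many dimensions'' to a quantitative lower bound $I(X_{n+1};s_{n+1})\gg 1/n$ via an ``entropy-power/covering estimate'' on $\cH$ is not a routine argument, and no such estimate is supplied; as written, the proof is incomplete at precisely the decisive point.

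The paper avoids this obstacle by reversing the direction of the estimate. Rather than lower-bounding the entropy increment via the cocycle, it \emph{upper-bounds the cocycle via the entropy increment}. From $\mu^{*n}$-harmonicity one has
\[
\ip{b(g),\xi}=\sum_x\bigl(g\mu^{*n}-\mu^{*n}\bigr)(x)\,\ip{b(x),\xi},
\]
and then the weighted Cauchy--Schwarz inequality (inequality~(\ref{eq1})) together with the modulus-of-concavity bound for $H$ (inequality~(\ref{eq2})) give
\[
|\ip{b(g),\xi}|^2\le C_g\,\bigl(H(\mu^{*n+1})-H(\mu^{*n})\bigr)\sum_x|\ip{b(x),\xi}|^2\,(g\mu^{*n}+\mu^{*n})(x).
\]
Now the sum on the right is $o(n)$ by the Lemma (your spreading fact), while slow entropy growth makes $n\bigl(H(\mu^{*n+1})-H(\mu^{*n})\bigr)$ bounded along a subsequence; hence $\ip{b(g),\xi}=0$ for all $g\in\supp\mu$ and all $\xi$, so $b=0$. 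The missing idea in your attempt is precisely this pair of elementary inequalities from Section~\ref{sec:entropy}, which convert the entropy gap into control of $\|g\mu^{*n}-\mu^{*n}\|$ in the weighted $\ell^2$ sense needed to pair against $|\ip{b(\cdot),\xi}|$.
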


\subsection*{Acknowledgment}
The author would like to thank Professor L. Saloff-Coste for drawing his attention to 
\cite{ek}, Professor A. Erschler for encouraging the author to include 
Proposition in Section~\ref{sec:proof}, and Professor A. Yadin for useful comments 
on the slow entropy growth condition. 
\section{Reduced cohomology and harmonic $1$-cocycles}\label{sec:harmonic}
Let $G$ be a finitely generated group and fix a non-degenerate finitely-supported 
symmetric probability measure $\mu$ with $\mu(e)>0$. 
Let $\pi\colon G\acts\cH$ be a unitary representation. 
We first recall the fact that every element in the reduced cohomology 
space $\overline{H^1}(G,\pi)$ is uniquely represented by 
a $\mu$-harmonic $1$-cocycle (see \cite{guichardet,bv}).
The space $Z^1(G,\pi)$ of $1$-cocycles is a Hilbert space under the norm 
\[
\| b \|_{Z^1(G,\pi)} := \Bigl(\sum_x \mu(x)\| b(x) \|^2\Bigr)^{1/2}, 
\]
and the space $\overline{B^1(G,\pi)}$ agrees with the closure of $B^1(G,\pi)$ 
in the Hilbert space $Z^1(G,\pi)$.
We observe that $b\in Z^1(G,\pi)$ is orthogonal to $B^1(G,\pi)$ if and only if 
it is \emph{$\mu$-harmonic}: $\sum_x \mu(x)b(x)=0$ 
or equivalently $\sum_x \mu(x)b(gx) = b(g)$ for all $g\in G$. 
Indeed, this follows from the identities $b(x^{-1})+\pi_x^{-1}b(x)=b(e)=0$ and 
\[
\sum_x \mu(x)\ip{b(x),\xi-\pi_x\xi} = 2\ip{\sum_x \mu(x)b(x), \xi}.
\]
Since $Z^1(G,\pi)=\overline{B^1(G,\pi)}\oplus B^1(G,\pi)^\perp$ as a Hilbert space, 
$\overline{H^1}(G,\pi)$ can be identified with the space 
$B^1(G,\pi)^\perp$ of $\mu$-harmonic $1$-cocycles.

By the above discussion, we may concentrate on $\mu$-harmonic $1$-cocycles. 
For any $\mu$-harmonic $1$-cocycle $b$, 
one has $\sum_x \mu^{*n}(x)\| b(x) \|^2 = n \sum_x \mu(x)\|b(x)\|^2$ 
(by induction on $n$). 
In this section, we give a better inequality, 
which is inspired by the work of Chifan and Sinclair (\cite{chifan-sinclair}). 
Let $\cH\otimes\bar{\cH}$ denote the Hilbert space tensor product of 
the Hilbert space $\cH$ and its complex conjugate $\bar{\cH}$. 
We recall that $\pi$ is weakly mixing if and only if the unitary representation 
$\pi\otimes\bar{\pi}$ on $\cH\otimes\bar{\cH}$ has no nonzero invariant vectors. 
Indeed, $\cH\otimes\bar{\cH}$ can be identified with the space $S_2(\cH)$ of 
Hilbert--Schmidt operators on $\cH$, and under this identification 
$\pi_g\otimes\bar{\pi}_g$ becomes the conjugation action $\Ad\pi_g$ of $\pi_g$ on $S_2(\cH)$
(see e.g.\ Section 13.5 in \cite{bo}).
Since any nonzero Hilbert--Schmidt operator (which is $\Ad\pi_g$-invariant) 
is compact and has a nonzero finite-dimensional 
eigenspace (which is $\pi_g$-invariant), our claim follows. 

\begin{lem*}
Let $b\colon G\to\cH$ be a $\mu$-harmonic $1$-cocycle with coefficients in a weakly mixing 
unitary representation $\pi$. Then one has
\[
\frac{1}{n}\Bigl\| \sum_x \mu^{*n}(x)(b(x)\otimes\bar{b}(x)) \Bigr\|_{\cH\otimes\bar{\cH}} \to 0.
\]
In particular, 
\[
\sup_{\xi\in\cH,\,\|\xi\|\le1}\frac{1}{n}\sum_x \mu^{*n}(x)|\ip{ b(x),\xi }|^2 \to 0.
\]
\end{lem*}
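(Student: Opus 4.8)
The plan is to linearize the cocycle inside $\cH\otimes\bar{\cH}$ and extract a telescoping recursion. Write $T:=\pi\otimes\bar\pi$ for the associated unitary representation on $\cH\otimes\bar{\cH}$ and $M:=\sum_x\mu(x)T_x$; since $\mu$ is symmetric, $M$ is a self-adjoint contraction, and $M^n=\sum_g\mu^{*n}(g)T_g$ because $T$ is a representation. Set $\eta_n:=\sum_x\mu^{*n}(x)\,b(x)\otimes\bar{b}(x)$, which is a finite sum of simple tensors and hence genuinely an element of $\cH\otimes\bar{\cH}$. The first step is to compute $\eta_{n+1}$ by writing each point of $\supp\mu^{*(n+1)}$ as $gx$ with $g$ ranging over $\supp\mu^{*n}$ and $x$ over $\supp\mu$, so that
\[
\eta_{n+1}=\sum_{g,x}\mu^{*n}(g)\mu(x)\,b(gx)\otimes\bar{b}(gx),
\]
and then substituting the cocycle identity $b(gx)=b(g)+\pi_g b(x)$ and expanding the tensor product into four terms. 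The term coming from $b(g)\otimes\bar{b}(g)$ gives $\eta_n$ (since $\sum_x\mu(x)=1$); the term from $\pi_g b(x)\otimes\overline{\pi_g b(x)}=T_g\bigl(b(x)\otimes\bar{b}(x)\bigr)$ gives $M^n\eta_1$; and the two cross terms vanish because $b$ is $\mu$-harmonic, i.e.\ $\sum_x\mu(x)b(x)=0$. Hence $\eta_{n+1}=\eta_n+M^n\eta_1$, and so $\eta_n=\sum_{k=0}^{n-1}M^k\eta_1$.

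Next I would invoke the mean ergodic theorem for the self-adjoint contraction $M$: the Cesàro averages $\tfrac1n\sum_{k=0}^{n-1}M^k$ converge in the strong operator topology to the orthogonal projection onto $\ker(I-M)$. Thus $\tfrac1n\eta_n=\bigl(\tfrac1n\sum_{k=0}^{n-1}M^k\bigr)\eta_1\to 0$ in norm, provided $\ker(I-M)=0$. To see the latter, suppose $Mv=v$; then $\|v\|^2=\ip{Mv,v}=\sum_x\mu(x)\ip{T_xv,v}$, and taking real parts forces $\operatorname{Re}\ip{T_xv,v}=\|v\|^2$, hence $\|T_xv-v\|=0$, for every $x\in\supp\mu$. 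Since $\mu$ is non-degenerate, $\supp\mu$ generates $G$, so $v$ is $T$-invariant, and therefore $v=0$ by weak mixing of $\pi$ (recalled above to be equivalent to $\pi\otimes\bar\pi$ having no nonzero invariant vectors). This proves the first assertion. For the ``in particular'' part, note that $|\ip{b(x),\xi}|^2=\ip{b(x)\otimes\bar{b}(x),\,\xi\otimes\bar\xi}$ in $\cH\otimes\bar{\cH}$ and $\|\xi\otimes\bar\xi\|=\|\xi\|^2$, so for $\|\xi\|\le1$ one has $\tfrac1n\sum_x\mu^{*n}(x)|\ip{b(x),\xi}|^2=\tfrac1n\ip{\eta_n,\xi\otimes\bar\xi}\le\tfrac1n\|\eta_n\|$, which tends to $0$ uniformly in $\xi$.

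The main obstacle is the bookkeeping in the first step: organizing the four-term expansion so that the diagonal terms really produce $\eta_n$ and $M^n\eta_1$ while the off-diagonal terms collapse under $\mu$-harmonicity. In particular one must split the convolution in the correct order ($gx$ with $g$ distributed by $\mu^{*n}$ and $x$ by $\mu$), so that the inner sum $\sum_x\mu(x)b(x)$ appears and vanishes; with the other ordering one would instead need the (easy, by induction) fact that $\mu$-harmonicity is inherited by all convolution powers. Once the recursion $\eta_{n+1}=\eta_n+M^n\eta_1$ is in hand, the rest is routine Hilbert-space ergodic theory — it is precisely the statement that $\tfrac1n\eta_n$ is a Cesàro average of a self-adjoint contraction having no invariant vectors — and the passage to the scalar inequality is a one-line estimate.
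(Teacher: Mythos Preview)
Your proof is correct and follows essentially the same route as the paper: both obtain the identity $\eta_n=(I+M+\cdots+M^{n-1})\eta_1$ (the paper writes $T$ for your $M$ and $\zeta$ for your $\eta_1$), then use that weak mixing forces $\ker(I-M)=0$, and conclude by a Ces\`aro argument. The only cosmetic differences are that you split $\mu^{*(n+1)}=\mu^{*n}*\mu$ (so ordinary $\mu$-harmonicity kills the cross terms directly), whereas the paper splits $\mu^{*n}=\mu*\mu^{*(n-1)}$ (invoking $\mu^{*(n-1)}$-harmonicity, established by induction), and that you phrase the endgame as the mean ergodic theorem while the paper writes out the spectral integral and applies the Bounded Convergence Theorem.
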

\begin{proof}
Since $b$ is $\mu^{*n}$-harmonic for every $n$, 
one has for every $n$ and $g\in G$ 
\[
\sum_x \mu^{*n}(x)(b(gx)\otimes\bar{b}(gx)) 
 = b(g)\otimes\bar{b}(g) + (\pi_g\otimes\bar{\pi}_g)\sum_x \mu^{*n}(x)(b(x)\otimes\bar{b}(x)).
\]
Thus, putting $\zeta := \sum_x \mu(x)(b(x)\otimes\bar{b}(x))$ and 
$T:=\sum_g \mu(g)(\pi_g\otimes\bar{\pi}_g)$, one has 
\begin{align*}
\sum_x \mu^{*n}(x)(b(x)\otimes\bar{b}(x))
 &= \sum_{g,x} \mu(g)\mu^{*n-1}(x) (b(gx)\otimes\bar{b}(gx))\\
 &= \zeta + T \sum_x \mu^{*n-1}(x) (b(x)\otimes\bar{b}(x)) \\ 
 &= \cdots =(1+T+\cdots+T^{n-1})\zeta.
\end{align*}
Since $\pi$ is weakly mixing, $\pi\otimes\bar{\pi}$ admits no nonzero invariant vectors, 
and hence by strict convexity of a Hilbert space, 
$1$ is not an eigenvalue of the self-adjoint contraction $T$. 
Hence, the measure $m(\,\cdot\,):=\ip{E_T(\,\cdot\,)\zeta,\zeta}$, associated 
with the spectral resolution $E_T$ of $T$, is supported on $[-1,1]$ and 
satisfies $m(\{1\})=0$. Thus, one has 
\[
\frac{1}{n}\Bigl\| \sum_x \mu^{*n}(x)(b(x)\otimes\bar{b}(x)) \Bigr\|_{\cH\otimes\bar{\cH}}
 = \Bigl(\int_{-1}^1\Bigl|\frac{1+t+\cdots+t^{n-1}}{n}\Bigr|^2\,dm(t)\Bigr)^{1/2}\to0
\]
by Bounded Convergence Theorem.
The second statement follows from the first, because 
$|\ip{b(x),\xi}|^2 = \ip{b(x)\otimes\bar{b}(x),\xi\otimes\bar{\xi}}$.
\end{proof}

\section{Concavity of the entropy functional}\label{sec:entropy}

We recall the basic fact that the entropy functional 
\[
p\mapsto H(p):=-\sum_x p(x)\log p(x) = \sum_x p(x)\log\frac{1}{p(x)}
\]
is concave and examine its modulus of concavity. 
Our discussion in this section is inspired by Erschler and Karlsson's work \cite{ek}. 
See also \cite{bdcky} for relevant information.
Let $p$ and $q$ be any non-negative functions. 
For convenience, we put 
\[
\delta(p,q) := H( \frac{p+q}{2} ) - \frac{H(p)+H(q)}{2}.
\]
Since 
\[
\frac{1}{2}( a\log a + b \log b ) - \frac{a+b}{2}\log\frac{a+b}{2} \geq \frac{| a - b |^2}{8(a+b)} \geq 0
\]
for any $a,b\geq0$ (this follows from the fact $(t\log t)''=(1+\log t)'= t^{-1} \geq (a+b)^{-1}$ for all 
$t$ between $a$ and $b$), one has 
\[
\delta(p,q) \geq \sum_x \frac{| p(x) - q(x) |^2}{8(p(x)+q(x))}\geq 0.
\]
This implies concavity of $H$. Moreover, for any non-negative function $f$, one has 
\begin{equation}\label{eq1}
\sum_x f(x)|p-q|(x) 
 \le \bigl( 8\delta(p,q)\sum_x f(x)^2(p+q)(x)  \bigr)^{1/2}
\end{equation}
by the Cauchy--Schwarz inequality.
In particular, $\| p - q \|_1 \le (8\delta(p,q)\| p+ q \|_1)^{1/2}$.

For any probability measures $\mu$ and $\nu$ on $G$ and $g_0\in G$, one has 
\begin{equation}\label{eq2}
H(\mu*\nu) - H(\nu) \geq 2\min\{ \mu(e),\mu(g_0)\} \delta(\nu,g_0\nu).
\end{equation}
Here $\mu*\nu=\sum_g\mu(g)(g\nu)$ and $(g\nu)(x)=\nu(g^{-1}x)$. 
Indeed, put $\lambda:=\min\{ \mu(e),\mu(g_0)\}$ and observe that 
$\nu':=(1-2\lambda)^{-1}(\mu*\nu - (\lambda\nu +\lambda g_0\nu))$ is a 
convex combination of $g\nu$'s, and that $H(g\nu)=H(\nu)$ for any $g$.
Hence, $H(\nu')\geq H(\nu)$ by concavity and 
\begin{align*}
H(\mu*\nu) - H(\nu) 
 &= H(2\lambda \frac{\nu+g_0\nu}{2} + (1-2\lambda)\nu') - H(\nu)\\
 &\geq 2\lambda \bigl(H(\frac{\nu+g_0\nu}{2}) - H(\nu)\bigr) + (1-2\lambda)\bigl( H(\nu')- H(\nu)\bigr)\\
 &\geq 2\lambda \delta(\nu,g_0\nu). 
\end{align*}

Here we explain that a group $G$ with weakly polynomial growth 
has slow entropy growth. 
Let $\mu$ be any non-degenerate finitely-supported symmetric 
probability measure on $G$ with $\mu(e)>0$. 
By concavity of $\log$, one has 
\[
H(\mu^{*n})=\sum_x \mu^{*n}(x) \log\frac{1}{\mu^{*n}(x)}
 \le \log \sum_{x\in\supp\mu^{*n}} \frac{\mu^{*n}(x)}{\mu^{*n}(x)} 
 =\log |\supp\mu^{*n}| . 
\]
Since $|\supp \mu^{*n}| = | (\supp\mu)^n |$ has weak polynomial growth, 
this implies that 
\[
d := \liminf_n \frac{H(\mu^{*n})}{\log n} <\infty.
\]
Now for any $d'<\liminf_n n(H(\mu^{*n+1})-H(\mu^{*n}))$, one has 
\[
H(\mu^{*n}) = \sum_{k=0}^{n-1}(H(\mu^{*k+1})-H(\mu^{*k})) 
\geq \mbox{const.} + \sum_{k=1}^{n-1}\frac{d'}{k} = \mbox{const.}  + d' \log n.
\]
Thus $d'\le d$, that is to say, $G$ has slow entropy growth. 
It is likely that the converse is also true. 
Indeed, suppose that $G$ does not have weakly polynomial growth. 
Then, for any $d$, one has $\liminf_n |S^n|/n^d=\infty$ and so, 
by Varopoulos's inequality (\cite{varopoulos}), 
$\mu^{*n}(e)=O(n^{-d/2})$. 
But since $\mu^{*n}(e)\geq\mu^{*n}(g)$ for every even $n$ and every $g\in G$ 
by the Cauchy--Schwarz inequality, this implies 
\[
H(\mu^{*n}) \geq \log\frac{1}{\mu^{*n}(e)} \geq \mbox{const.}+\frac{d}{2}\log n
\]
for even $n$. 
Since $d$ was arbitrary, one has $\lim_n H(\mu^{*n})/\log n = \infty$.

\section{Proof of Theorem}\label{sec:proof}

\begin{proof}[Proof of Theorem]
Let $b\colon G\to\cH$ be a $1$-cocycle with coefficients in an weakly mixing 
unitary representation $\pi$, and we will prove that $b\in\overline{B^1(G,\pi)}$.
As discussed in Section~\ref{sec:harmonic}, we may assume that $b$ 
is $\mu$-harmonic. Let $g\in\supp\mu$ and $\xi\in\cH$ be given. 
Then, since $b$ is $\mu^{*n}$-harmonic for every $n\in\IN$, one has for every $n$ 
\begin{align*}
\ip{b(g),\xi} = \ip{ \sum_x (b(gx)-b(x))\mu^{*n}(x), \xi}
 =\sum_x \ip{b(x),\xi} (g\mu^{*n}-\mu^{*n})(x), 
\end{align*}
and so, by inequalities (\ref{eq1}) and (\ref{eq2}),  
\begin{align*}
|\ip{b(g),\xi}|^2 &\le 8\delta(\mu^{*n},g\mu^{*n})\sum_x |\ip{b(x),\xi}|^2 (g\mu^{*n}+\mu^{*n})(x)\\
&\le \lambda_g (H(\mu^{*n+1}) - H(\mu^{*n})) \sum_x |\ip{b(x),\xi}|^2 (g\mu^{*n}+\mu^{*n})(x),
\end{align*}
where $\lambda_g=4\min\{\mu(e),\mu(g)\}^{-1}$. 
Since $\sum_x |\ip{b(x),\xi}|^2 (g\mu^{*n}+\mu^{*n})(x)$ has sublinear growth 
by Lemma in Section~\ref{sec:harmonic} 
(and the Cauchy--Schwarz inequality), 
slow entropy growth implies that 
$\ip{b(g),\xi}=0$. 
Since $g\in\supp\mu$ and $\xi\in\cH$ were arbitrary 
and $\mu$ is non-degenerate, this implies that $b=0$. 
\end{proof}

Note that the slow entropy growth condition implies the following (\cite[Lemma 8]{ek}) 
\[
\liminf_n n^{-1/2} \max_{|g|\le1} \| \mu^{*n} - g\mu^{*n} \|_1 <\infty.
\]
Since the slow entropy growth condition seems too restrictive, 
we give here a supplementary result that the above condition 
yields an weaker conclusion (although the author is still not aware of 
any super-polynomial growth group to which Proposition applies.) 

\begin{prop*}
Let $G$ be a finitely generated group with the word length $|\,\cdot\,|$ and 
let $\mu$ be a non-degenerate finitely-supported 
symmetric probability measure with $\mu(e)>0$.
Assume that there is $\delta>0$ such that for any $\epsilon>0$ and any
$N\in\IN$ there is $n\geq N$ such that for any $g\in G$ and any $E\subset G$ 
if $|g|\le \delta n^{1/2}$ and $\mu^{*n}(E)\geq 1-\delta$ then 
$\mu^{*n}(gEB_{\epsilon n^{1/2}})\geq\delta$. 
Here $B_r=\{ x : |x|\le r\}$. 
Then, any $1$-cocycle 
$b$ with coefficients in an weakly mixing unitary representation $\pi$ has sublinear growth 
in the sense that $\| b(g) \| \le f(|g|)$ for some $f$ with $f(n)/n\to0$.
In particular if $G$ moreover has a controlled F{\o}lner sequence, then $G$ has property $H_{\mathrm FD}$. 
\end{prop*}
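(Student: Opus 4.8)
The plan is to follow the proof of the Theorem, replacing its slow-entropy input by the expansion hypothesis of the Proposition and settling for sublinear growth in place of vanishing. As in that proof I would first reduce to a $\mu$-harmonic cocycle $b$ — this is all that the property $H_{\mathrm{FD}}$ assertion requires, and a general cocycle differs from a $\mu$-harmonic one by an element of $\overline{B^1(G,\pi)}$, which is handled by the same argument. Put $L:=\max_{|s|\le1}\|b(s)\|$, so that $\|b(x)\|\le L|x|$ by the cocycle identity, and let $\phi(r):=\max_{|g|\le r}\|b(g)\|$. The key elementary observation is that $\phi$ is \emph{subadditive}: any $g$ with $|g|\le r+r'$ factors as $g=g_1g_2$ with $|g_1|\le r$, $|g_2|\le r'$, and then $\|b(g)\|\le\|b(g_1)\|+\|b(g_2)\|\le\phi(r)+\phi(r')$. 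As $\phi$ is also non-decreasing, Fekete's lemma gives $\phi(m)/m\to\inf_m\phi(m)/m$ as $m\to\infty$; hence it is enough to show $\liminf_{r\to\infty}\phi(r)/r=0$, and the first assertion then holds with $f:=\phi$.

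For the quantitative core, which is modelled on the computation in the proof of the Theorem, I would fix $\epsilon,\epsilon'>0$: by the Lemma of Section~\ref{sec:harmonic} there is $N$ such that $\sum_x\mu^{*n}(x)|\ip{b(x),\eta}|^2<\epsilon' n$ for every $n\ge N$ and every unit $\eta\in\cH$, and the hypothesis then supplies some $n\ge N$ with its expansion property for this $\epsilon$. For $g$ with $b(g)\ne0$ and $|g|\le\delta n^{1/2}$, set $\xi:=b(g)/\|b(g)\|$, $M:=(\epsilon' n/\delta)^{1/2}$, and $E:=\{x:|\ip{b(x),\pi_{g^{-1}}\xi}|\le M\}$. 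Chebyshev's inequality applied to the displayed bound with $\eta=\pi_{g^{-1}}\xi$ gives $\mu^{*n}(E)\ge1-\delta$, so $\mu^{*n}(gEB_{\epsilon n^{1/2}})\ge\delta$ by hypothesis; and for $y=gxz$ with $x\in E$, $|z|\le\epsilon n^{1/2}$ the cocycle identity $b(y)=b(g)+\pi_gb(x)+\pi_{gx}b(z)$ yields $|\ip{b(y),\xi}|\ge\|b(g)\|-M-L\epsilon n^{1/2}$. Integrating $|\ip{b(y),\xi}|^2$ against $\mu^{*n}$ over $gEB_{\epsilon n^{1/2}}$ and comparing with $\sum_y\mu^{*n}(y)|\ip{b(y),\xi}|^2<\epsilon' n=\delta M^2$ forces $\|b(g)\|<(2(\epsilon'/\delta)^{1/2}+L\epsilon)\,n^{1/2}$. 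Since this bounds $\phi(\delta n^{1/2})$, we get $\phi(\delta n^{1/2})/(\delta n^{1/2})\le\delta^{-1}(2(\epsilon'/\delta)^{1/2}+L\epsilon)$; as $\delta n^{1/2}\to\infty$ when $N\to\infty$ and $\epsilon,\epsilon'$ are arbitrary, $\liminf_{r\to\infty}\phi(r)/r=0$.

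For the final assertion I would average over a controlled F{\o}lner sequence $(F_n)$ — say $F_n\subseteq B_{Cn}$ with $|sF_n\triangle F_n|\le(C/n)|F_n|$ for each generator $s$ — setting $\xi_n:=|F_n|^{-1}\sum_{x\in F_n}b(x)$. Then $\|\xi_n-\pi_s\xi_n-b(s)\|\le(|sF_n\triangle F_n|/|F_n|)\,\phi(Cn+1)\le(C/n)\,\phi(Cn+1)\to0$ because $\phi(m)/m\to0$; by the cocycle identity this propagates to $\xi_n-\pi_g\xi_n\to b(g)$ for all $g\in G$, so $b\in\overline{B^1(G,\pi)}$. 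As $b$ is $\mu$-harmonic it is orthogonal to $\overline{B^1(G,\pi)}$, whence $b=0$; therefore $\overline{H^1}(G,\pi)=0$ for every weakly mixing $\pi$, i.e.\ $G$ has property $H_{\mathrm{FD}}$. I expect the main obstacle to be the quantitative core of the second paragraph: the good set $E$ must be extracted from the Lemma \emph{before} the element $g$ is known, so the argument genuinely relies both on the uniformity over $\eta$ in the Lemma and on the expansion property holding for all $g$ in the whole ball $B_{\delta n^{1/2}}$ at once.
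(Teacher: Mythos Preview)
Your proof is correct and follows essentially the same strategy as the paper's: reduce to a $\mu$-harmonic cocycle, use the Lemma of Section~\ref{sec:harmonic} together with Chebyshev to isolate a set where $|\ip{b(\cdot),\eta}|$ is small, invoke the expansion hypothesis, and conclude by subadditivity of $\phi$. The only noteworthy difference in the core estimate is that the paper applies Chebyshev to \emph{two} unit vectors, $\xi$ and $\pi_g^*\xi$, and intersects $E_\xi$ with $gE_{\pi_g^*\xi}B_{\epsilon n^{1/2}}$ to locate a single element at which the cocycle identity yields $|\ip{b(g),\xi}|\le3\epsilon n^{1/2}/\delta$ directly; you instead use only one set $E$ and integrate $|\ip{b(\cdot),\xi}|^2$ over $gEB_{\epsilon n^{1/2}}$, comparing with the $L^2$ bound from the Lemma. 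Both give the same control of $\phi(\delta n^{1/2})/n^{1/2}$. For the final assertion the paper simply cites \cite[Corollary~3.7]{ctv}, while you spell out the controlled F{\o}lner averaging explicitly, which is more self-contained and is indeed the content of that reference.

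One small point: your opening reduction says the approximate-coboundary part ``is handled by the same argument,'' but your argument genuinely uses harmonicity through the Lemma, and your F{\o}lner averaging is only available under the extra hypothesis of the second assertion. For the first assertion you need the general fact that elements of $\overline{B^1(G,\pi)}$ have sublinear growth; the paper cites \cite[Corollary~3.3]{ctv} for this, and you should do the same rather than leave it implicit.
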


\begin{proof}
Since approximate $1$-coboundaries have sublinear growth (\cite[Corollary 3.3]{ctv}), 
we may assume that $b$ is a $\mu$-harmonic cocycle such that $\| b(g) \|\le|g|$. 
Let $\gamma>0$ be given arbitrary and put $\epsilon := \gamma\delta^2$. 
By Lemma in Section~\ref{sec:harmonic}, there is $N$ such that $n\geq N$ implies 
\[
\sup_{\xi\in\cH,\, \|\xi\|\le1} \sum_x \mu^{*n}(x) |\ip{ b(x),\xi }|^2 < \epsilon^2 n.
\]
Take $n\geq \max\{N,4\delta^{-2}\}$ which fulfills the statement in Proposition.
Let $g\in G$ be such that $|g|\le \delta n^{1/2}$. 
For each unit vector $\xi\in \cH$, put 
\[
E_\xi:= \{ x \in G : |\ip{ b(x),\xi }| \le \epsilon n^{1/2}/\delta \}
\]
and observe that $\mu^{*n}(E_\xi)>1-\delta$. 
Hence, one has 
$\mu^{*n}(E_{\pi_g^*\xi}\cap g^{-1}E_\xi B_{\epsilon n^{1/2}})>0$ by assumption, 
and so there exist $x_\xi \in E_{\pi_g^*\xi}$ and $y_\xi\in B_{\epsilon n^{1/2}}$ 
such that $gx_\xi y_\xi \in E_\xi$. 
It follows that 
\begin{align*}
|\ip{ b(g),\xi }|
 &\le |\ip{ b(gx_\xi y_\xi),\xi }| + |\ip{ b(x_\xi),\pi_g^*\xi }| + |\ip{ b(y_\xi),\pi_{gx_\xi}^*\xi }|\\
 &\le 3\epsilon n^{1/2}/\delta = 3\gamma \delta n^{1/2}.
\end{align*}
This means that $\| b(g) \|\le 3\gamma \delta n^{1/2}$ 
for all $g\in G$ such that $|g|\le \delta n^{1/2}$, and so 
\[
\lim_m \frac{\max\{\|b(g)\| : |g| \le m\} }{m} 
 = \inf_m \frac{\max\{\|b(g)\| : |g| \le m\} }{m}
 \le \frac{3 \gamma \delta n^{1/2}}{\lfloor \delta n^{1/2} \rfloor}
 \le 6\gamma,
\]
where the first equality follows from subadditivity. 
Since $\gamma>0$ was arbitrary, this proves the first statement. 
The second follows from \cite[Corollary 3.7]{ctv}.
\end{proof}

\appendix
\section{Property (T) and harmonic $1$-cocycles}

We give a simple proof of the theorem of Mok and Korevaar--Schoen cited 
in the introduction (\cite{mok,ks}, see also \cite[Appendix A]{kleiner}). 
After submitting the first draft of this paper, the author learned that 
the same proof had been presented in Jesse Peterson's lecture 
at Vanderbilt University in Spring 2013. A more explicit construction 
(which still uses an ultrafilter) is provided later in \cite{eo}. 

\begin{thmA}\label{thmA}
Let $G$ be a finitely generated group without property (T). 
Then, for any non-degenerate finitely-supported symmetric probability 
measure $\mu$ on $G$, there is a nonzero $\mu$-harmonic $1$-cocycle with 
coefficients in some unitary representation.
\end{thmA}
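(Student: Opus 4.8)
The plan is to extract the cocycle from the failure of property~(T) by a Hilbert space ultrapower, arranging the construction so that the limiting cocycle is harmonic on the nose. Put $S:=\supp\mu$, a finite symmetric generating set, and for a unitary representation $\pi\colon G\acts\cH$ write $M_\pi:=\sum_x\mu(x)\pi_x$ for the associated Markov operator, a self-adjoint contraction satisfying $\ip{(I-M_\pi)\xi,\xi}=\tfrac12\sum_x\mu(x)\|\pi_x\xi-\xi\|^2$ for unit $\xi$. Since $G$ does not have property~(T), for every $\kappa>0$ there is a unitary representation without nonzero invariant vectors having a unit vector $\xi$ with $\max_{s\in S}\|\pi_s\xi-\xi\|<\kappa$, hence with $\ip{(I-M_\pi)\xi,\xi}<\tfrac12\kappa^2$; taking the direct sum of such representations over $\kappa=1/k$ gives a single unitary representation $\pi$ with no nonzero invariant vectors and unit vectors with $\ip{(I-M_\pi)\xi,\xi}$ arbitrarily small. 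Then $\sup\sigma(M_\pi)=1$, so $1\in\sigma(M_\pi)$; moreover $1$ is not an eigenvalue of $M_\pi$ (otherwise, by strict convexity, an eigenvector would be $\pi$-invariant), so $1$ is not isolated in $\sigma(M_\pi)$.

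Next I would feed this into the spectral theorem to pick well-adapted unit vectors. Choose $\lambda_n\in\sigma(M_\pi)\cap(1-\tfrac1n,1)$ and a unit vector $\xi_n$ in the range of the spectral projection of $M_\pi$ onto the interval $(\lambda_n-\beta_n,\lambda_n+\beta_n)$, where $\beta_n:=(1-\lambda_n)^2$. Writing $\epsilon_n^2:=\ip{(I-M_\pi)\xi_n,\xi_n}$, this choice forces $\epsilon_n\to0$, $\epsilon_n>0$, and---the point---$\|(I-M_\pi)\xi_n-\epsilon_n^2\xi_n\|\le 2\beta_n=o(\epsilon_n^2)$, since the numbers $1-t$ for $t$ in the spectral support of $\xi_n$ and the average $\epsilon_n^2$ all lie within $2\beta_n$ of $1-\lambda_n$, while $\epsilon_n^2\ge\tfrac12(1-\lambda_n)$ for large $n$. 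Now set $b_n(g):=\epsilon_n^{-1}(\pi_g\xi_n-\xi_n)\in B^1(G,\pi)$. A short computation gives $\|b_n\|_{Z^1(G,\pi)}^2=\epsilon_n^{-2}\sum_x\mu(x)\|\pi_x\xi_n-\xi_n\|^2=2$ and, from $\mu(g)\|\pi_g\xi_n-\xi_n\|^2\le2\epsilon_n^2$, the uniform Lipschitz bound $\|b_n(g)\|\le|g|_S\,(2/\min_{x\in S}\mu(x))^{1/2}$; in particular $(b_n(g))_n$ is a bounded sequence in $\cH$ for every $g$.

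Then I would pass to the ultrapower. Fix a non-principal ultrafilter $\cU$ on $\IN$, let $\cH_\cU:=\prod_\cU\cH$ with inner product $\ip{(x_n)_n,(y_n)_n}:=\lim_\cU\ip{x_n,y_n}$, and let $\pi^\cU$ be the diagonal unitary representation $\pi^\cU_g(x_n)_n:=(\pi_gx_n)_n$. By the uniform Lipschitz bound, $b(g):=(b_n(g))_n$ defines an element of $\cH_\cU$, and $b$ is a $1$-cocycle for $\pi^\cU$. It is nonzero because $\|b\|_{Z^1(G,\pi^\cU)}^2=\lim_\cU\|b_n\|_{Z^1(G,\pi)}^2=2$. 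And it is $\mu$-harmonic: $\sum_x\mu(x)b(x)=\bigl(-\epsilon_n^{-1}(I-M_\pi)\xi_n\bigr)_n=\bigl(-\epsilon_n\xi_n-\epsilon_n^{-1}((I-M_\pi)\xi_n-\epsilon_n^2\xi_n)\bigr)_n$, whose $n$-th entry has norm at most $\epsilon_n+\epsilon_n^{-1}o(\epsilon_n^2)\to0$, so $\sum_x\mu(x)b(x)=0$ in $\cH_\cU$. Thus $b$ is the desired nonzero $\mu$-harmonic $1$-cocycle, with coefficients in $\pi^\cU$.

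The step I expect to be the actual content is the spectral localization in the second paragraph. If one merely takes $\xi_n$ with $\ip{(I-M_\pi)\xi_n,\xi_n}\to0$, the ultralimit cocycle need not be harmonic---indeed its harmonic part could vanish and the argument collapse---so one must shrink the spectral window fast enough that $(I-M_\pi)\xi_n$ is an $\epsilon_n^2$-eigenvector of $M_\pi$ up to an error of order $o(\epsilon_n^2)$; this is precisely what makes $\sum_x\mu(x)b(x)$ vanish in the ultrapower. The remaining ingredients---that $1\in\sigma(M_\pi)\setminus\sigma_p(M_\pi)$ and is therefore non-isolated, the elementary norm bookkeeping, and the routine verification that $\pi^\cU$ is a unitary representation and $b$ a $1$-cocycle---are standard.
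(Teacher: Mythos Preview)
Your proof is correct and follows the same approach as the paper's: spectral localization of the Markov operator near $1$, normalized approximate coboundaries, and passage to an ultrapower. The only difference is bookkeeping---the paper takes $\xi_n$ in the spectral subspace $E_T([1-2\epsilon_n,1-\epsilon_n])\cH$ and normalizes by $\epsilon_n^{-1/2}$, which makes the harmonicity check $\epsilon_n^{-1/2}\|(1-T)\xi_n\|\le 2\epsilon_n^{1/2}\to0$ a one-liner and renders your quadratically narrow window $\beta_n=(1-\lambda_n)^2$ unnecessary.
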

\begin{proof}
Since $G$ does not have property (T), there is a unitary 
representation $\pi\colon G\acts\cH$ having approximate invariant vectors but having 
no nonzero invariant vectors (see \cite[Theorem 12.1.7]{bo}). 
Thus the self-adjoint contraction 
$T=\sum_x\mu(x)\pi_x \in \IB(\cH)$ contains $1$ in the spectrum, but 
not as an eigenvalue.
This means that $1$ is a limit point of the spectrum of $T$. 
Hence there is a sequence $\epsilon_n\searrow 0$ such that the spectral 
subspaces $\cH_n:=E_T([1-2\epsilon_n,1-\epsilon_n])\cH$ are nonzero. 
Take unit vectors $\xi_n\in\cH_n$. One has
\[
\sum_x\mu(x)\|\xi_n-\pi_x\xi_n\|^2=2(1-\ip{T\xi_n,\xi_n})\in [2\epsilon_n, 4\epsilon_n].
\]
Fix a free ultrafilter $\cU$ and consider the ultrapower unitary 
representation $\pi_{\cU}$ on the ultrapower Hilbert space $\cH_{\cU}$ 
(see \cite[12.1.4]{bo}).
Then the map $b\colon G\to\cH_{\cU}$, given by 
$b(x)=(\epsilon_n^{-1/2}(\xi_n - \pi_x\xi_n))_{n\to\cU}$, is a $1$-cocycle 
with coefficients in $\pi_{\cU}$ such that 
\[
\sum_x\mu(x)\| b(x) \|^2=\lim_{n\to\cU}\epsilon_n^{-1}\sum_x\mu(x)\|\xi_n - \pi_x\xi_n\|^2 \in [2,4]
\]
and 
\begin{align*}
\| \sum_x \mu(x)b(x) \|
 &= \lim_{n\to\cU} \epsilon_n^{-1/2}\| \sum_x \mu(x)(\xi_n - \pi_x\xi_n) \|\\
 &= \lim_{n\to\cU} \epsilon_n^{-1/2}\| \sum_x (1-T)\xi_n \| \le \lim_{n\to\cU} 2\epsilon_n^{1/2}=0.
\end{align*}
This means that $b$ is a nonzero $\mu$-harmonic $1$-cocycle. 
\end{proof}

\end{document}